\newcommand{\ddd}{\,\mathrm{d}}
\newcommand{\mathscr}[1]{\mathcal{#1}}
\def \rr {{\mathbb R}}
\def \cc {{\mathbb C}}
\def\var{\varphi}
\DeclareMathOperator{\dom}{dom}
\DeclareMathOperator{\spec}{spec}
\DeclareMathOperator{\supp}{supp}
\DeclareMathOperator{\pv}{p.v.}
\newcommand{\spece}{\spec_\mathrm{ess}}
\def\l{\lambda}
\def\ep{\varepsilon}
\def\k{\kappa}
\def\O{\Omega}
\def\S{\Sigma}
\def\sS{{\mathcal{S}}} 
\def\sK{{\mathcal{K}}}
\DeclareMathOperator{\Op}{Op}
\def \cc {{\mathbb{C}}}
\newtheorem{theorem}{Theorem}
\newtheorem{lemma}[theorem]{Lemma}
\theoremstyle{definition}
\newtheorem{remark}[theorem]{Remark}
\newtheorem{oprob}[theorem]{Open problem}
\title[]{Curvature contribution to the essential spectrum of Dirac operators with critical shell interactions}
\author[B. Benhellal]{Badreddine Benhellal}
\address{Carl von Ossietzky Universit\"at Oldenburg, Institut f\"ur Mathematik, 26111 Oldenburg, Germany}                
\email{badreddine.benhellal@uol.de}
\author[K. Pankrashkin]{Konstantin Pankrashkin}
\address{Carl von Ossietzky Universit\"at Oldenburg, Institut f\"ur Mathematik, 26111 Oldenburg, Germany}
\email{konstantin.pankrashkin@uol.de}
\begin{document}

\allowdisplaybreaks[3]

\begin{abstract} We discuss the spectral properties of three-dimensional Dirac operators with critical combinations of
electrostatic and Lorentz scalar shell interactions supported by a compact smooth surface.
It turns out that the criticality of the interaction may result in a new interval of essential spectrum.
The position and the length of the interval are explicitly controlled by the coupling constants and the principal curvatures of the surface. This effect is completely new compared to lower dimensional critical situations or special geometries
considered up to now, in which only a single new point in the essential spectrum was observed.
\end{abstract}

\keywords{Dirac operators, pseudodifferential operators, essential spectrum, principal curvature, transmission condition, boundary integral operators}

\subjclass{35Q40, 47A10, 58J40, 53A05}

\maketitle	

\section{Introduction}

The three-dimensional Dirac operator $D$ with fixed mass $m\in\rr$ acts on $\cc^4$-valued vector functions (spinors) $u$ as
\[
D:\quad u\mapsto -i\sum_{j\in\{1,2,3\}}\alpha_j\,\partial_j u +m\beta u,
\]
where $\partial_j$ is the partial derivative with respect to the $j$th variable (in any orthogonal Cartesian basis of $\rr^3$) and $(\alpha_1,\alpha_2,\alpha_3,\beta)$ is a family of four anticommuting $4\times 4$ hermitian matrices such that the square of each of them is the identity matrix. The maximal realization $H_0$ of $D$ in $L^2(\rr^3,\cc^4)$, given by
\[
H_0: u\mapsto D u,
\quad
\dom H_0:=H^1(\rr^3,\cc^4),
\]
is a self-adjoint operator in $L^2(\rr^3,\cc^4)$; it is the free Dirac operator playing a central role in relativistic quantum mechanics. In fact, many relativistic quantum Hamiltonians have the form $H_0+V$ with potential perturbations $V$: we refer to the book \cite{Tha} for a detailed discussion. 
The present paper is devoted to the spectral analysis of the
operators formally written as
\begin{equation}
	   \label{eq-hhh}
H:=H_0+(\ep +\mu\beta)\delta_{\S},
\end{equation}
where $\ep$ and $\mu$ are real parameters and $\delta_\S$
is the Dirac distribution supported by a compact surface $\S\subset \rr^3$. Such operators represent limiting models
of potential interactions strongly localized near $\S$ (the parameter $\ep$ is usually referred to as the strength of the electrostatic shell interaction supported by $\S$, while $\mu$ is the strength of the Lorentz shell interaction),
and they are rigorously defined using transmission conditions along $\S$. It seems that the operators of the above type were first studied in \cite{DES89} for the case when $\Sigma$ is a sphere, while
the case of general $\Sigma$ was introduced in \cite{AMV1,AMV2}.

In order to avoid technicalities we assume that $\Sigma$ is $C^\infty$ smooth (recall that it is also compact). By applying the Jordan-Brouwer separation theorem to each connected component of $\Sigma$ one finds a bounded open set $\Omega_+$ with smooth boundary $\partial\Omega_+=\Sigma$. We further set $\Omega_-:=\rr^3\setminus \overline{\Omega_+}$ and denote by
 $\nu$ the smooth unit normal on $\Sigma$ pointing to $\Omega_-$.
For a function $u$ defined on $\rr^3\setminus\S$ we denote by $u_\pm$
its restrictions on $\Omega_\pm$, and if $u_\pm$ admit suitably defined traces on $\S$, one defines $\delta_\S u$ to be the distribution defined by
\[
\langle \delta_\S u, \varphi\rangle :=\int_\S \dfrac{u_+ + u_-}{2}\,\varphi \ddd s
\]
for any test function $\varphi$ on $\rr^3$. For such a function $u$, the condition that the distribution $Du+(\ep +\mu\beta)\delta_{\S} u$
contains no summands with $\delta_\Sigma$ is then (at least formally) equivalent to
the transmission condition (compensation of singularities)
\begin{equation}\label{TC}
	i(\alpha\cdot\nu)\,(u_+ - u_-)
	+(\ep +\mu\beta) \frac{u_+ + u_-}{2}=0 \text{ on }\S,
\end{equation}
where we use the same symbol for functions and their traces on $\Sigma$ for a better readability, and
for $x=(x_1,x_2,x_3)\in\rr^3$ the traditional notation
\begin{equation}
	\label{ax}
\alpha\cdot x:=x_1\alpha_1 + x_2\alpha_2+x_3\alpha_3
\end{equation}
is used. 
Then by the operator $H$ formally written as \eqref{eq-hhh} we actually understand the linear operator $H$ in $L^2(\rr^3,\cc^4)$ acting as
\[
H:\ u\mapsto Du \text{ in the sense of distributions on }  \rr^3\setminus\S
\]
on the maximal domain
\begin{equation*}\label{domh}
\begin{aligned}
\dom H:={}&\big\{ u\in L^2(\rr^3,\cc^4):\ D u \in L^2(\rr^3\setminus \S,\cc^4)\\
& \text{ and $u$ satisfies the transmission condition \eqref{TC}}\big\};
\end{aligned}
\end{equation*}
recall that the traces of $u_\pm$ on $\Sigma$ are
then well-defined by duality as elements of $H^{-\frac{1}{2}}(\S,\cc^4)$.

The operator $H$ was already analyzed for a variety of situations, and main observations can be summarized as follows.
If $\ep^2-\mu^2\ne 4$, then the operator $H$ is self-adjoint. Moreover, its domain is contained in $H^1(\rr^3\setminus\S)$, the essential spectrum coincides with the spectrum of the free Dirac operator $H_0$, i.e. there holds
	$\spece H=\spec H_0\equiv\big(-\infty,-|m|\big]\cup \big[|m|,\infty\big)$,
	and the discrete spectrum of $H$ in $\big(-|m|,|m|\big)$ is at most finite.
In other words, the spectral picture of $H$ for $\ep^2-\mu^2\ne 4$ (which is usually referred to as the non-critical case) is a kind of classical one, i.e. it is a compactly supported perturbation which does not influence the essential spectrum and which produces at most finitely many discrete eigenvalues, see the papers \cite{BEHL2,BHSS, OBP} which also contains an overview of related results. As discussed in \cite{Rab1}, the non-critical case corresponds exactly to the situations in which the Lopatinski-Shapiro condition is satisfied.

It seems that the critical case $\ep^2-\mu^2=4$ was first approached in \cite{BH,OBV}: they only considered the purely electrostatic case $\ep=\pm 2$ and $\mu=0$ and have shown that $H$ is still self-adjoint but may have new portions of essential spectrum. Namely it is shown in \cite{BH} that for any $m\ne 0$ one has $0\in\spece H$ if $\Sigma$ is partially flat (i.e. if it contains a non-empty relatively open subset of a plane). The paper \cite{BB2} established the self-adjointness of $H$ for the general critical case (even under less restrictive regularity assumptions for $\Sigma$), but the question of the essential spectrum for general $\Sigma$ was not covered. The very recent preprint \cite{BHSS} shows the inclusion  $-\frac{ m \mu}{\ep}\in\spece H$ for partially flat $\Sigma$.

The operator $H$ admits a naturally defined counterpart in two dimensions, i.e. a two-dimensional Dirac operator with an interaction supported by a closed curve, which was studied in \cite{BHOP}. The respective family of operators is again parametrized by $(\ep,\mu)\in\rr^2$, and the same critical and non-critical cases arise. In the non-critical case the conclusions were
the same as in the three-dimensional case, but the critical case $\ep^2-\mu^2=4$ could be completely analyzed: it was
shown that the operator is self-adjoint on the natural maximal domain and that the criticality of the perturbation results
in the appearance of the new point $-\frac{m\mu}{\ep}$ in the essential spectrum (if $m\ne 0$). In other words, the compactly supported critical perturbation produces infinitely many eigenvalues in the initial gap of the essential spectrum.
We also mention that a complete characterization of the essential spectrum of  $H$ was established in~\cite{BB2} in three dimensions, but for a special unbounded $\Sigma$ (locally perturbed plane), and the same spectral effect was found, i.e., the appearance of the new point $-\frac{ m\mu}{\ep}$ for any critical combination of parameters.

In the present paper, we resolve completely the problem of the essential spectrum of $H$ for all critical combinations of parameters ($\ep^2-\mu^2=4$) and any smooth compact surface $\S\subset\rr^3$. The spectral picture turns out to be quite different from what could be predicted from all earlier observations, as shown in our main result: 

\begin{theorem}\label{main1}
	Let $\kappa_1$ and $\kappa_2$ be the principal curvatures on $\S$ and
	\[
	A_\S:=\max_{x\in \S} \big| \kappa_1(x)-\kappa_2(x)\big|.
	\]	
	If $\ep^2-\mu^2=4$, then $\spece H= \big(-\infty,-|m|\big]\cup \big[|m|,\infty\big) \cup J$ with
	\[	J:=\Big[-\frac{m\mu}{\ep}-\dfrac{A_\Sigma}{2|\ep|},-\frac{m\mu}{\ep}+\dfrac{A_\Sigma}{2|\ep|}\Big].
	\]
\end{theorem}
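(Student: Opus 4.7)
The plan is to reduce the spectral problem for $H$ in the gap $(-|m|,|m|)$ to a Fredholmness question for a boundary pseudodifferential operator on $\Sigma$ and then to perform a subprincipal symbol computation whose angular dependence on the cosphere bundle produces the interval $J$. Following the layer-potential scheme of \cite{AMV1,BEHL2,OBP}, for $\lambda$ in the gap a function $u\in\dom H$ with $(H-\lambda)u=0$ is represented as a single-layer potential with density $\varphi\in H^{-1/2}(\Sigma,\cc^4)$ satisfying a boundary equation of the form $T_\lambda\varphi=0$, where schematically $T_\lambda=(\ep+\mu\beta)C_\Sigma^\lambda+i(\alpha\cdot\nu)$ and $C_\Sigma^\lambda$ is the Cauchy/principal-value boundary operator attached to the Dirac Green function at spectral parameter $\lambda$. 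Hence $\lambda\in\spec H\cap(-|m|,|m|)$ iff $T_\lambda$ is not invertible, and $\lambda\in\spece H\cap(-|m|,|m|)$ iff the zeroth-order classical pseudodifferential operator $T_\lambda$ on $\Sigma$ is not Fredholm, i.e.\ iff its principal symbol is not pointwise invertible over the cosphere bundle $S^*\Sigma$. The inclusion $\spec H_0\subset\spece H$ will be obtained routinely from Weyl sequences supported far from the compact set $\Sigma$.

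In the non-critical case the principal symbol of $T_\lambda$ is everywhere invertible. In the critical case $\ep^2-\mu^2=4$ the relation $(\ep+\mu\beta)(\ep-\mu\beta)=4$ together with the involutive structure of the principal symbol of $C_\Sigma^\lambda$ produce a rank-two kernel sub-bundle of $S^*\Sigma\times\cc^4$. I would multiply $T_\lambda$ from the left and the right by suitable elliptic zeroth-order pseudodifferential factors so as to decouple $T_\lambda$ into an elliptic block and a reduced block acting on this kernel sub-bundle. The reduced block has vanishing leading symbol, and its Fredholmness is therefore governed by the next-order symbol of $C_\Sigma^\lambda$, which is known to bring in the second fundamental form of $\Sigma$.

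The key computation is then the subprincipal symbol of $C_\Sigma^\lambda$. At a point $x\in\Sigma$, working in an orthonormal basis $(e_1,e_2)$ of principal directions of $\Sigma$ and writing $\xi\in T_x^*\Sigma$ as $|\xi|(\cos\theta\,e_1+\sin\theta\,e_2)$, I expect the reduced symbol to take the form
\[
\Big(\lambda+\frac{m\mu}{\ep}\Big)\Id+\frac{1}{2\ep}\big(\kappa_1(x)-\kappa_2(x)\big)R(\theta)
\]
on the kernel sub-bundle, where $R(\theta)$ is a hermitian involution built from the angular factor $\cos 2\theta$. Invertibility of this $2\times 2$ matrix for every $(x,\xi)\in S^*\Sigma$ is then equivalent to $|\lambda+m\mu/\ep|>A_\Sigma/(2|\ep|)$, i.e.\ to $\lambda\notin J$. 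For $\lambda\in(-|m|,|m|)\setminus J$ the reduced operator is elliptic, $T_\lambda$ is Fredholm, and hence $\lambda\notin\spece H$. For $\lambda$ in the interior of $J$ one proves $\lambda\in\spece H$ by constructing Weyl singular sequences for $H-\lambda$: one takes tangentially oscillating coherent states on $\Sigma$ localised near a point $x_0$ realising $|\kappa_1(x_0)-\kappa_2(x_0)|=A_\Sigma$ and in the angular direction $\theta_0$ that annihilates the reduced symbol, transports them into $\rr^3$ via the single-layer potential and multiplies by a cut-off in the normal variable; the endpoints of $J$ are then added by closedness of $\spece H$.

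The main obstacle will be the subprincipal symbol expansion of $C_\Sigma^\lambda$ in the critical regime and the identification of its geometric content. A delicate cancellation is required so that the two independent coupling constants $\ep$ and $\mu$ enter the final formula only through the centre $-m\mu/\ep$ and the half-width $1/(2|\ep|)$, and so that the curvature correction appears with the precise combination $\kappa_1-\kappa_2$ rather than through the mean curvature, the Gaussian curvature, or some other invariant of the second fundamental form. Once this symbolic calculation is pinned down, the rest of the argument consists in combining the standard elliptic regularity and Weyl-sequence machinery with the explicit formula for the critical symbol.
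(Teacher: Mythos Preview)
Your plan is on the right track and matches the paper's architecture at a high level: reduce to a boundary pseudodifferential operator, observe that in the critical case the principal symbol acquires a two-dimensional kernel, and pass to the next order where the second fundamental form shows up. But the paper's execution is more concrete than what you outline, and the differences matter.

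First, the paper does not decouple $T_\lambda$ by vague elliptic conjugations. It writes $C_\lambda$ in $2\times 2$ block form with blocks $(\lambda\pm m)K_\lambda$ and $W_\lambda$, conjugates by $L_\lambda:=K_\lambda^{-1/2}$ (a genuine order-$\tfrac12$ operator, since $K_\lambda$ is elliptic of order $-1$), and then takes the Schur complement $S_\lambda$ of the resulting first-order block matrix. This is the precise meaning of your ``reduced block'': the Schur complement turns out to be of order zero, and by Tretter's block-matrix theory one has $\lambda\in\spece H\Leftrightarrow 0\in\spece S_\lambda$.

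Second, and this is the point you flag as the main obstacle, the paper does \emph{not} compute the subprincipal symbol of $C_\Sigma^\lambda$ head-on. Instead it uses the algebraic identity $\big((\alpha\cdot\nu)C_\lambda\big)^2=-\tfrac14$ to rewrite $W_\lambda^2-\tfrac14$ modulo $\Op\sS^{-2}$ as $(\sigma\cdot\nu)Z_\lambda W_\lambda$, where $Z_\lambda:=(\sigma\cdot\nu)W_\lambda+W_\lambda(\sigma\cdot\nu)$ is the anticommutator. The crucial computation (Theorem~\ref{main3}) is that $Z_\lambda$ is actually of order $-1$, not $0$: its kernel splits into a Neumann--Poincar\'e piece (order $-2$, hence invisible) and a piece whose principal symbol is $\tfrac12(\kappa_1-\kappa_2)(\sigma\cdot\nu)\,\xi_1\xi_2/|\xi|^3$. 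This is where the mean curvature drops out and only $\kappa_1-\kappa_2$ survives, the ``delicate cancellation'' you anticipate. Your proposed direct subprincipal expansion of $C_\Sigma^\lambda$ would have to reproduce this cancellation, and it is not obvious without the anticommutator trick.

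Two smaller corrections. The angular factor is $\xi_1\xi_2/|\xi|^2=\tfrac12\sin 2\theta$, not $\cos 2\theta$; your $R(\theta)$ is therefore not an involution but has eigenvalues $\pm\tfrac12\sin 2\theta$, and the interval $J$ comes from the range $[-\tfrac12,\tfrac12]$ of this scalar together with the matrix factor $\sigma\cdot(\xi_1e_1+\xi_2e_2)/|\xi|$ having eigenvalues $\pm1$. Finally, your Weyl-sequence construction for the interior of $J$ is unnecessary: once $S_\lambda$ (equivalently $R_\lambda$) is identified as a zeroth-order classical pseudodifferential operator, the folkloric fact that $\spece$ of such an operator equals the range of the eigenvalues of its principal symbol over $S^*\Sigma$ (Lemma~\ref{lem8}) gives both inclusions at once.
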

Therefore, for any critical perturbation of parameters the essential spectrum of $H$ consists of two (probably overlapping) portions having different nature. The first portion, $\big(-\infty,-|m|\big]\cup \big[|m|,\infty\big)\equiv \spec H_0$, is inherited from the free Dirac operator and does not depend on $\S$, while the second portion $J$ (and which only becomes visible for $m\ne 0$) is the interval of length $\frac{A_\Sigma}{|\ep|}$ centered at the point  $-\frac{m\mu}{\ep}$, and it strongly depends on the geometry of $\Sigma$. Remark that a similar structure of the essential spectrum appears in some mixed order problems \cite{grubb}.

It is a well known result of differential geometry that the only connected closed surfaces $\S\subset\rr^3$ with $\kappa_1\equiv \kappa_2$ (which is equivalent to $A_\S=0$ in our context) are spheres \cite[Sec.~3.2, Prop.~4]{docarmo}. Therefore, the case when $\Sigma$ is the union of finitely many disjoint spheres is the only one which produces a single new point in the essential spectrum, and
a critical shell interaction supported by any other $\Sigma$ adds a non-trivial interval to the essential spectrum.
It may be of its own interest to look for $\S$ that minimize  $A_\S$ under various constraints (which then minimize the length of the new portion of the essential spectrum), which leads to the following open problem:
\begin{oprob}
Which connected smooth surfaces $\S\subset\rr^3$ of genus $g\ge 1$ and fixed area minimize $\max |\kappa_1-\kappa_2|$, with $\kappa_1$ and $\kappa_2$ being the principal curvatures? 
\end{oprob}	

\begin{remark}
One can easily obtain some estimates for $A_\Sigma$ in terms of simpler geometric characteristics. The lower bound for the Willmore energy from \cite{mn} states that for any connected smooth compact surface $\S\subset \rr^3$ of genus $g\ge 1$ there holds
\begin{equation}
	\label{mn1}
\int_\S (\kappa_1+\kappa_2)^2\ddd  s\ge 8\pi^2.
\end{equation}
As the Euler characteristics of $\S$ is $\chi_\S:=2-2g$, the Gauss-Bonnet theorem gives
\begin{align*}
\int_\S (\kappa_1-\kappa_2)^2\ddd s
&=\int_\S (\kappa_1+\kappa_2)^2\ddd s-4\int_\S \kappa_1\kappa_2\ddd s\\
&=\int_\S (\kappa_1+\kappa_2)^2\ddd s-4\cdot 2\pi (2-2g)
\ge 8\pi^2+16 \pi (g-1).
\end{align*}
Using the trivial inequality
\begin{equation}
	 \label{mn1a}
A_\Sigma^2 |\Sigma|\ge \int_\S (\kappa_1-\kappa_2)^2\ddd s
\end{equation}
one arrives at the lower bound
\begin{equation}
	\label{mn2}
A_\Sigma\ge \dfrac{2\pi}{\sqrt{|\Sigma|}}\cdot \sqrt{2+\frac{4(g-1)}{\pi}}
\equiv
\dfrac{2\sqrt{2}\pi}{\sqrt{|\Sigma|}}\cdot \sqrt{1-\frac{\chi_\S}{\pi}}.
\end{equation}
However, this bound is never attained. In fact, if one had the equality in \eqref{mn2} for some $\Sigma$, then would have equalities in both \eqref{mn1} and \eqref{mn1a}. It is shown in \cite{mn} that the equality in \eqref{mn1} only holds for Clifford tori (some special tori of revolution). As the difference of principal curvatures of Clifford tori is non-constant, the inequality in \eqref{mn1a} is strict.
We further remark that the Clifford tori are not minimizers of $A_\S$ for $g=1$
and constant $|\S|$: a simple explicit computation shows that other tori of revolution produce strictly smaller values of $A_\S$.
\end{remark}

The rest of the text will be devoted to the proof of Theorem \ref{main1}, so from now on we assume that $\ep^2-\mu^4=4$. The inclusion $(-\infty,-|m|]\cup[|m|,\infty)\subset \spece H$  is easily shown by constructing Weyl sequences supported away from $\Sigma$ as in \cite[Thm. 5.7]{BH}, so we will only be interested in $\big(-|m|,|m|\big)\cap \spece H$ with $m\ne 0$. The proof structure is as follows. In Section~\ref{ss-prep} we recall
the reformulation of the spectral problem in terms of matrix singular integral operators over $\S$. Using the theory of block operator matrices we reduce our problem to the essential spectrum of a pencil of $2\times 2$ integral operators $S_\l$ over $\S$, see \eqref{lll3}. As the first step in the analysis of $S_\l$, in Section \ref{ss-imp} we compute the principal symbols of some auxiliary operators. These computations are then used in Section \ref{ss-schur} in order to complete the spectral study of $S_\l$, which completes the proof of Theorem~\ref{main1}.

\section{Preparations for the proof}\label{ss-prep}

Remark that there are many possible choices for $\alpha_j$ and $\beta$, but it follows from the theory
of Clifford algebras that the resulting operators are unitarily equivalent to each other and have the same spectra: see
e.g. \cite[Chapter 15]{dg} or the very condensed discussion in \cite[Lemma 2.4]{mobp}. To have more
explicit computations we choose  $(\alpha_1, \alpha_2, \alpha_3,\beta)$ as
\begin{align*}
	\alpha_k&=\begin{pmatrix}
	0 & \sigma_j\\
	\sigma_j & 0
	\end{pmatrix}\quad \text{for } j=1,2,3,
	\quad
	\beta=\begin{pmatrix}
	\mathit{I}_2 & 0\\
	0 & -\mathit{I}_2
	\end{pmatrix},
\end{align*}  
where $\mathit{I}_n$ is the $n\times n$-identity matrix and $(\sigma_1, \sigma_2, \sigma_3)$ are the $2\times 2$ Pauli matrices,
\begin{align*}
\sigma_1=\begin{pmatrix}
	0 & 1\\
	1 & 0
	\end{pmatrix},\quad \sigma_2=
	\begin{pmatrix}
	0 & -i\\
	i & 0
	\end{pmatrix} ,
	\quad
	\sigma_3=\begin{pmatrix}
	1 & 0\\
	0 & -1
	\end{pmatrix}.
	\end{align*}  
By analogy with \eqref{ax}, for $x=(x_1,x_2,x_3)\in\rr^3$ we denote
\[
	\sigma\cdot x:=x_1\sigma_1 + x_2\sigma_2+x_3\sigma_3,
\]
which satsfies the identity
\begin{equation}
	\label{ssxy}
 (\sigma\cdot x) (\sigma\cdot y)= \langle x,y\rangle I_2 + i\sigma\cdot (x\times y), \quad x,y\in\rr^3,
\end{equation}
where $\langle\cdot,\cdot\rangle$ and $\times$ are the usual scalar and cross vector product in $\rr^3$ respectively.

Recall that the Dirac operator $D$ satisfies $(D-\lambda)^2(D+\lambda)=-\Delta-(\lambda^2-m^2)$ and the function
\[
\Psi_z:x\mapsto \dfrac{e^{i\sqrt{z}|x|}}{4\pi |x|}, \quad \Im \sqrt{z}>0,
\]
is a fundamental solution of $-\Delta-z$. One easily checks that
for any $|\lambda|\le |m|$ the function $\Phi_{\l}:=(D+\lambda)(\Psi_{\lambda^2-m^2}\otimes I_4)$, i.e.
\begin{equation*}
	\Phi_{\l}:\,x\mapsto \frac{e^{-\sqrt{m^2-\l^2}|x|}}{4\pi|x|}\left(\l I_4 +m\beta+\big( 1+ \sqrt{m^2-\l^2}|x|\big )\, i\alpha\cdot\frac{x}{|x|^2}\right),
\end{equation*}
is a fundamental solution of $D-\lambda$.  For the same $\l$ define the singular integral operators $C_\l$ on $L^2(\S,\cc^4)$ by
\begin{align*}
C_\l f(x)&=  \lim\limits_{\rho\searrow 0}\int_{\substack{y\in\S\\ |x-y|>\rho}}\Phi_{\l}(x-y)f(y)\ddd s(y),
\end{align*}
where $\ddd s$ stands for the integration with respect to the surface measure. It is
well known that  $C_\l:L^2(\S,\cc^4)\to L^2(\S,\cc^4)$ is bounded and self-adjoint \cite[Sec. 2]{AMV1},
satisfies
\begin{align}\label{PRI}
\big((\alpha\cdot \nu) C_\l\big)^2=\big( C_{\l} (\alpha\cdot \nu)\big)^2=-\frac{1}{4}\mathit{I}_4, \quad \l\in[-m,m],
\end{align}
see \cite[Lemma 2.2]{AMV2}, and standard considerations \cite[Chap. 7, Sec. 11]{T1} show that it is a zero-order classical pseudodifferential operator on $\S$, cf. \cite[Thoerem 4.1]{BBZ}.
The operator $C_\l$ is known to play a central role in the spectral analysis of $H$. To explain the link, let $\Delta_\Sigma$ be the negative Laplace-Beltrami on $\Sigma$ and
\[
\widetilde L:=(1-\Delta_\Sigma)^\frac{1}{4}
\]
viewed as a pseudodifferential operator, then
$\widetilde L:H^{s}(\S)\to H^{s-\frac{1}{2}}(\S)$ is an isomorphism for any $s\in\rr$. Consider now the operator
\[
\widetilde \Lambda_\lambda:=(\Tilde L\otimes I_4)\Big( \frac{1}{4}(\ep\mathit{I}_4 -\mu\beta) + C_\l \Big) (\Tilde L\otimes I_4)
\]
acting in $L^2(\S,\cc^4)$ on the maximal domain, Remark that $\widetilde\Lambda_\lambda$ is a first-order pseudodifferential opperator,
so it can be alternatively defined by starting on smooth functions and then taking the closure. Our analysis
will be based on the following equivalence proved in \cite[Lemma 4.1]{BB2}:
for any $\lambda\in\big(-|m|,|m|\big)$ one has the equivalence
\begin{equation}
	  \label{hl1}
\lambda\in \spece H \quad \Longleftrightarrow\quad 0\in\spece \widetilde\Lambda_\l.
\end{equation}
All subsequent analysis of $H$ will be based on this relation and on the study of $\widetilde\Lambda_\l$.

It will be convenient to consider the above $4\times 4$ operators as $2\times 2$ block operators with $2\times 2$ blocks. Namely,
\begin{gather}
\Phi_\l=\begin{pmatrix}
	(\l+m) k_\l I_2 & w_\l\\
	w_\l & (\l-m) k_\l I_2
\end{pmatrix},\nonumber\\
\label{kernels}
	k_{\l}:x\mapsto \frac{e^{i\sqrt{m^2-\l^2}|x|}}{4\pi|x|}, \
	\quad
	w_{\l}:x\mapsto \frac{e^{-\sqrt{m^2-\l^2}|x|}}{4\pi|x|}\left( 1+\sqrt{m^2-\l^2}|x|\right)i\sigma\cdot\frac{x}{|x|^2},
\end{gather}
which implies the block decomposition
\[
\label{Cauchymattform}
	C_{\l} = \begin{pmatrix}
		(\l+m)K_{\l}\otimes I_2  & W_{\l}\\
		W_{\l}&  (\l-m)K_{\l}\otimes I_2
	\end{pmatrix},
\]
with bounded self-adjoint operators $K_\l$ in $L^2(\S)$ and $W_\l$ in $L^2(\S,\cc^2)$ defined by
\begin{align}\label{Opkernels}
	\begin{split}
		K_{\l}g(x)&=\int_\S k_{\l}(x-y)g(y)\ddd s (y),\quad g\in L^2(\Sigma),\quad x\in\S,\\
		W_{\l}g(x)&= \lim\limits_{\rho\searrow 0}\int_{\substack{y\in\S\\ |x-y|>\rho}}w_{\l}(x-y)g(y)\ddd s(y),\quad g\in L^2(\Sigma,\cc^2), \quad x\in\S,
	\end{split}
\end{align}
and then the equality \eqref{PRI} reads as
\begin{align}\label{Pro W et K}
	\big((\sigma\cdot \nu)W_{\l}\big)^2 + (\l^2-m^2)\big((\sigma\cdot \nu)(K_{\l}\otimes I_2)\big)^2=-\frac{1}{4} I_2.
\end{align}

This gives the representation
\begin{equation}
	\label{lll}
\widetilde\Lambda_\l=\begin{pmatrix}
	\Big[\widetilde L\big( \tfrac{\ep-\mu}{4}+(\l+m)K_\l\big)\widetilde L\Big]\otimes I_2 & (\widetilde L\otimes I_2)W_\l (\widetilde L\otimes I_2)\\
	(\widetilde L\otimes I_2)W_\l (\widetilde L\otimes I_2) & \Big[\widetilde L\big( \tfrac{\ep+\mu}{4}+(\l-m)K_\l\big)\widetilde L\Big]\otimes I_2
\end{pmatrix}.
\end{equation}

Let us proceed with an interpretation of the blocks of $\Tilde\Lambda_\l$ as pseudodifferential operators. In order to fix various $2\pi$-like factors we remark that all conventions will correspond to the initial definition of the Fourier transform in $\rr^n$ by
\[
	\widehat{f}(\xi)=\int_{\rr^n}e^{-i\langle x,\xi\rangle} f(x)\ddd x
\]
for $f$ in the Schwartz class of functions, and we denote by
$\Op\sS^m_n$ the class of classical pseudodifferential operators of order $\le m$ acting on sections of $\S\times\cc^n$
and refer to \cite{T1,T2} for basic definitions and  properties of pseudodifferential operators on manifolds. It follows from the standard approach treating layer potential operators as pseudodifferential operators, see \cite[Chap. 7, Sec. 11]{T1}, that  $K_\l\in\Op\sS^{-1}_1$ and $W_\l\in\Op\sS^{0}_2$.

\begin{remark}[Special coordinates near a point of $\Sigma$]\label{rmk4}
We will compute all principal symbols for a specific choice of local coordinates on $\S$. Namely, let $x\in\S$ and consider the respective
Weingarten map $M_x:=-\ddd\nu|_x:T_x M\to T_x M$. The eigenvalues $\kappa_1(x)$ and $\kappa_2(x)$ of $M_x$ are called the principal curvatures of $\S$ in $x$. As $M_x$ is self-adjoint with respect
to the scalar product inherited from $\rr^3$, there exists an orthonormal basis $(e_1,e_2)$ in $T_x M$ (we omit its dependence on $x$)
such that $M_x e_j=\kappa_j(x)e_j$ for $j\in\{1,2\}$ and that the orthonormal basis $\big(e_1,e_2,\nu(x)\big)$ of $\rr^3$ is positively oriented. Then one can construct a local chart $\varphi:\rr^2\supset U\to V\subset\Sigma$ near $x$, with $0\in U$, $\varphi(0)=0$, $\partial_j\varphi(0)=e_j$ for $j\in\{1,2\}$. The coordinates defined by $\varphi$ will be referred to as \emph{special coordinates near $x$}.
Remark that the matrix of the metric tensor at $x$ in these coordinates coincides then with $I_2$, and the matrix
of $M_x$ is $\text{diag\,}\big(\k_1(x),\k_2(x)\big)$.
\end{remark}

The following properties of $K_\l$ are well known, but we include a proof for the sake of competeness (and as a warm-up for the subsequent constructions).
\begin{lemma}\label{lem5}
For any $|\l|\le |m|$ the operator $K_\l$ is positive and injective in $L^2(\Sigma)$, and
$K_\l\in\Op \sS^{-1}_1$ with principal symbol
\[
p_{K_\lambda}:(x,\xi)\mapsto \dfrac{1}{2|\xi|}
\]
in the above special coordinates near $x$.
Moreover,  $K_\l:H^s(\S)\to H^{s-1}(\S)$ is an isomorphism for any $s\in \rr$.
\end{lemma}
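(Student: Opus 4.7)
The plan is to establish three facts in turn using standard pseudodifferential techniques: (a) that $K_\l$ is classical of order $-1$ with principal symbol $1/(2|\xi|)$, (b) that $K_\l$ is positive and injective on $L^2(\S)$, and (c) that (a)--(b) combine to give the isomorphism property. For (a), I would apply the layer-potential framework of \cite[Chap.~7, Sec.~11]{T1}: only the leading singularity of $k_\l$ at the diagonal contributes to the principal symbol, and the remainder $k_\l(z)-1/(4\pi|z|)$ is continuous near $z=0$ and corresponds to an operator of strictly lower order. In the special coordinates of Remark \ref{rmk4} based at $x\in\S$, the induced metric at the origin coincides with the Euclidean one, so $|\var(x)-\var(y)|$ agrees with $|x-y|$ up to smooth corrections which again contribute only to lower-order symbols. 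The principal symbol at $(0,\xi)$ is therefore the $\rr^2$-Fourier transform of $z\mapsto 1/(4\pi|z|)$, and the classical identity $\widehat{|z|^{-1}}(\xi)=2\pi/|\xi|$ on $\rr^2$ yields $p_{K_\l}(x,\xi)=1/(2|\xi|)$.

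For (b), set $\m:=\sqrt{m^2-\l^2}\ge 0$ and exploit the Fourier representation
\[
k_\l(z)=(2\pi)^{-3}\int_{\rr^3}\frac{e^{i\langle\xi,z\rangle}}{|\xi|^2+\m^2}\ddd\xi,
\]
valid even when $\m=0$ since $|\xi|^{-2}$ is locally integrable in dimension three. For $f\in L^2(\S)$, the compactly supported distribution $f\ddd s$ on $\rr^3$ has bounded continuous Fourier transform, so Parseval's identity gives
\[
\langle K_\l f,f\rangle_{L^2(\S)}=(2\pi)^{-3}\int_{\rr^3}\frac{|\widehat{f\ddd s}(\xi)|^2}{|\xi|^2+\m^2}\ddd\xi\ge 0,
\]
with equality forcing $\widehat{f\ddd s}\equiv 0$, and hence $f\equiv 0$ in $L^2(\S)$.

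For (c), the nonvanishing of the principal symbol on $T^\ast\S\setminus 0$ makes $K_\l$ elliptic of order $-1$, so it is Fredholm as a map $H^s(\S)\to H^{s+1}(\S)$ for every $s\in\rr$. By elliptic regularity its kernel lies in $C^\infty(\S)$ independently of $s$ and is therefore trivial by (b); by the symmetry of the kernel $k_\l$ the operator is self-adjoint on $L^2(\S)$, so the cokernel vanishes as well, yielding the isomorphism. I expect the main obstacle to be step (a): verifying rigorously that the curvature of $\S$ and the smooth factor $e^{-\m|z|}$ affect only the subprincipal terms requires some careful bookkeeping in the local expansion, although the computation becomes essentially routine once the coordinates of Remark \ref{rmk4} are in place.
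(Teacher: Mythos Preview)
Your proof is correct. Parts (a) and (c) track the paper's argument closely: the paper also strips off the bounded remainder $k_\l-k_m$, passes to special coordinates where the metric is Euclidean at the base point, and identifies the principal symbol as the $\rr^2$-Fourier transform of $1/(4\pi|z|)$; for the isomorphism it likewise invokes ellipticity, Fredholmness of index zero, and the injectivity from (b).

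The genuine difference is in (b). The paper argues via potential theory: it introduces the single layer potential $\sK_\l g$ on $\rr^3$, applies Green's formula in $\Omega_\pm$ together with the jump relation $\partial_\nu^+\sK_\l g-\partial_\nu^-\sK_\l g=g$, and obtains
\[
\langle K_\l g,g\rangle_{L^2(\S)}=\|\nabla\sK_\l g\|^2_{L^2(\rr^3)}+(m^2-\l^2)\|\sK_\l g\|^2_{L^2(\rr^3)},
\]
with a separate treatment of the borderline case $|\l|=|m|$ via decay at infinity. Your Fourier-side computation recovers the same quantity by Plancherel, but bypasses Green's formula and the jump relations entirely; it is shorter and handles $|\l|=|m|$ uniformly (since $|\xi|^{-2}$ is locally integrable in $\rr^3$), at the price of relying on the explicit fundamental solution. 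The paper's route, in contrast, generalizes more readily to kernels without a clean Fourier representation and makes the link to the mapping properties of $\sK_\l$ explicit. One small point worth spelling out in your version is why the Plancherel integral is finite for arbitrary $f\in L^2(\S)$; this follows from $f\,\ddd s\in H^{-1}(\rr^3)$ via the trace theorem, but deserves a sentence.
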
	

\begin{proof} Consider the single layer potential $\sK_\lambda$ for $-\Delta+m^2-\lambda^2$,
\[
\sK_\l g (x)=\int_\S k_\l(x-y)g(y)\ddd s(y), \quad g\in L^2(\Sigma),\quad x\in\rr^3.
\]
and briefly recall its properties \cite[Chapter 9]{Mc}. One has $(-\Delta+m^2-\lambda^2)\sK_\l g=0$
in $\Omega_\pm$. For $|\l|=|m|$ one has $\nabla\sK_\l g\in L^2(\O_\pm)$ with $\lim_{|x|\to\infty} \sK_\l g(x)=0$,
while $\sK_\l g\in H^1(\O_\pm)$ for $|\l|<|m|$. The trace of $\sK_\l g$ on $\S$ coincides with $K_\l g$,
and one has the jump relation $\partial_\nu^+ K_\l g-\partial_\nu^- K_\l g=g$ on $\S$,
where $\partial_\nu^\pm$ is the outer normal derivative for $\Omega_\pm$.

Let $|\lambda|<|m|$, then using the Green formula we have
\begin{align*}
	(\lambda^2-m^2) \big\|\sK_{\l}g\big\|^2_{L^2(\Omega_\pm)}&=\big\langle \sK_{\l}g,-\Delta \sK_{\l}g\big\rangle_{L^2(\Omega_\pm)}\\
&=	\big\|\nabla\sK_{\l}g \|^2_{L^2(\Omega_\pm)}
	\mp \langle K_{\l}g, \partial^{\pm}_{\nu}\sK_{\l}g\rangle_{L^2(\S)}.
\end{align*}
Using the above jump relation one obtains then
\begin{align*}
	\langle K_{\l}g, g\rangle_{L^2(\S)}&=\langle K_{\l}g, \partial^{+}_{\nu}\sK_{\l}g\rangle_{L^2(\S)}-\langle K_{\l}g, \partial^{-}_{\nu}\sK_{\l}g\rangle_{L^2(\S)}\\
	&=\big\|\nabla\sK_{\l}g \|^2_{L^2(\Omega_+)}+\big\|\nabla\sK_{\l}g \|^2_{L^2(\Omega_-)}
	+(m^2-\lambda^2) \big\|\sK_{\l}g\big\|^2_{L^2(\rr^3)}\ge 0.
\end{align*}	
If $K_\lambda g=0$ for some $g\in L^2(\Sigma)$, then the last equation gives $\sK_\lambda g\equiv 0$,
and the jump relation on $\Sigma$ gives $g=0$.

For $|\l|=|m|$ we obtain in the same way
\begin{align*}
	\langle K_{\l}g, g\rangle_{L^2(\S)}=\big\|\nabla\sK_{\l}g \|^2_{L^2(\Omega_+)}+\big\|\nabla\sK_{\l}g \|^2_{L^2(\Omega_-)}\ge 0.
\end{align*}
If $K_\lambda g=0$ for some $g\in L^2(\Sigma)$, then $\nabla\sK_\l g=0$ in $L^2(\O_\pm)$  and it follows that $\sK_\l g$ is constant on each connected component of $\rr^3\setminus\S$.
Due to the decay at infinity, $\sK_\l g=0$ on the unbounded component, then the continuity along $\Sigma$ shows that $\sK_\l g$ is identically zero,
and the jump relation on $\Sigma$ gives $g=0$.

Let us compute the principal symbol of $K_\l$. For any convolution kernel $k(x,y)$ we will denote by $T_{k}$ the associted integral operator. First observe that
$k_{\l}(x)= k_{\pm m}(x) + k_{\l,1}(x)$, where $k_{\l,1}$ is bounded near $0$, which shows that
$T_{k_{\l,1}}\in\Op\sS^{-2}_1$. Hence, it is sufficient to compute the principal symbol
of $K_m$.

Let $x\in\S$ and choose a special coordinates $(U,V,\var)$ near $x$ as in Remark~\ref{rmk4}.
Let $\psi_j\in C^\infty_c(\Sigma)$, $j\in\{1,2\}$, be real-valued with $\supp \psi_j\subset V$.
Let $D\varphi$ be the Jacobi matrix of $\varphi$ and
\begin{equation}
	   \label{gg1}
G_\varphi(t):=D\varphi(t)^T D\varphi(t), \quad g_\varphi (t):=\sqrt{\det G_\varphi(t)}.
\end{equation}
Recall that for any $f\in C^\infty(\S)$ we have
\[
\psi_2\big(\varphi(s)\big) K_\lambda (\psi_1 f)\big( \varphi(s)\big)=
\psi_2\big(\varphi(s)\big)
\int_U k_\lambda\big(\varphi(s)-\varphi(t)\big) \psi_1\big(\varphi(t)\big) f\big(\varphi(t)\big) g_\varphi(t)\ddd t.
\]
For small $|s-t|$ we have
\begin{equation}
	\label{gg2}
\begin{aligned}
\big|\varphi(s)-\varphi(t)\big|&=\sqrt{\big\langle (s-t), G_\varphi(s)(s-t)\big\rangle}\Big( 1+O\big(|s-t|\big)\Big),\\
g_\varphi(t)&=g_\varphi(s)+O\big(|s-t|\big),
\end{aligned}
\end{equation}
so we can represent
\begin{equation}
	  \label{tmp1}
\begin{aligned}
	\big(\psi_2 K_{m}(\psi_1f)\big)\big(\varphi(s)\big) &=\psi_2\big(\varphi(s)\big)\,g_\varphi(s)\int_{U}\frac{\psi_1\big(\varphi(t)\big)f(\varphi(t))}{4\pi\sqrt{\big\langle (s-t), G_\varphi(s)(s-t)\big\rangle}}\ddd t\\
	&\quad +\big(\psi_2 T_{k_{\l,2}}(\psi_1f)\big)\big(\varphi(s)\big), 
\end{aligned}
\end{equation}
with some kernel $k_{\l,2}$ bounded near $0$, and we again deduce $T_{k_{\l,2}}\in\Op\sS^{-2}_1$.

Consider the homogeneous function
\[
h_a: \rr^2\ni t\mapsto \dfrac{1}{4\pi \sqrt{\big\langle t, G_\varphi(a) t\big\rangle}},
\]
then the equality \eqref{tmp1} takes the form
\begin{multline*}
\Big[\big(\psi_2 K_m(\psi_1f) \big)\Big](\varphi(s))
=\psi_2\big(\varphi(s)\big) g_\varphi(s) \Big[h_s\ast \big((\psi_1 f)\circ\varphi\big)\Big](s)
+\big(\psi_2 T_{k_{\l,2}}(\psi_1g)\big)\big(\varphi(s)\big),
\end{multline*}
where $\ast$ is the convolution product on $\rr^2$. This shows
that the principal symbol in the chosen coordinates is $g_\varphi(s) \,\widehat{h_s}(\xi)$.
The point $x$ corresponds to $s=0$, and $G_\varphi(0)=I_2$ and $g_\varphi(0)=1$. Then
\[
h_0(t)=\dfrac{1}{4\pi |t|},\quad \widehat{h_0}(\xi)=\dfrac{1}{2|\xi|},
\]
so we obtain that the principal symbol of $K_m$ at $x$ is $p_{K_{m}}(x,\xi)=\frac{1}{2|\xi|}$.

As the principal symbol of $K_\lambda$ does not vanish for $\xi\ne 0$, the operator $K_\l$ is elliptic of order $(-1)$,
so $K_\l:H^s(\S)\to H^{s+1}(\S)$ is Fredholm of index $0$ for any $s\in\rr$. As we have already shown that $K_\l$ is injective on $L^2(\S)$, we then deduce that $K_\l:H^s(\S)\to H^{s+1}(\S)$ is an isomorphism and the proof is complete.
\end{proof}	

Now let get back to the condition $0\in\spece\widetilde \Lambda_\l$ with $\Tilde\Lambda_\l$ from \eqref{lll}. In view of Lemma~\ref{lem5} the well-defined operator
\[
L_\lambda:=K_\lambda^{-\frac{1}{2}}, \quad \l\in\big[-|m|,|m|\big]
\]
is an isomorphism between $H^s(\S)$ and $H^{s-\frac{1}{2}}(\S)$ for any $s\in\rr$. Then  $L_\lambda \widetilde L^{-1}$ is an isomorphism
of $H^s(\S)$ for any $s\in\S$, and the condition $0\in\spece\widetilde \Lambda_\l$
becomes equivalent to
\begin{equation}
	\label{spll}
	0\in\spece \Lambda_\l
\end{equation}
with the operators $\Lambda_\l$	in $L^2(\S;\cc^4)$ given by
\begin{equation*}
	\label{lll2}
\begin{aligned}	
\Lambda_\l&:=\big[(L_\l \widetilde L^{-1})\otimes I_4\big]	\widetilde\Lambda_\l \big[(\widetilde L^{-1} L_\lambda)\otimes I_4\big]\\
&=\begin{pmatrix}
	\Big[L_\lambda\big( \tfrac{\ep-\mu}{4}+(\l+m)K_\l\big)L_\lambda\Big]\otimes I_2 & (L_\l\otimes I_2)W_\l (L_\lambda\otimes I_2)\\
	(L_\lambda\otimes I_2)W_\l (L_\l\otimes I_2) & \Big[L_\l\big( \tfrac{\ep+\mu}{4}+(\l-m)K_\l\big)L_\lambda\Big]\otimes I_2
\end{pmatrix}\\
&=:\begin{pmatrix}
	\Lambda_\l^{11} & \Lambda_\l^{12}\\
	\Lambda_\l^{21} & \Lambda_\l^{22}
\end{pmatrix}
\end{aligned}
\end{equation*}
defined first on $C^\infty(\S,\cc^2)$ and then extended by taking the closure, which is equivalent to taking the maximal domain as it is a first-order pseudodifferential operator.

We know that $K_\l\ge 0$ by Lemma~\ref{lem5}. For $\lambda\in\big(-|m|,|m|\big)$
the numbers $\lambda+m$ and $\lambda-m$ have opposite signs, while the numbers
$\ep+\mu$ and $\ep-\mu$ have the same sign due to
$(\ep+\mu)(\ep-\mu)\equiv\ep^2-\mu^2=4$. To be definite we assume that $\ep+\mu$ and $\lambda+m$ have the same signs, then
$\Lambda_\l^{11}$ is invertible for all $\lambda\in\big(-|m|,|m|\big)$ (if $\ep+\mu$ and $\lambda+m$ have opposite signs,
then one procceds in the same way using the invertibility of $\Lambda_\l^{22}$). It follows
by the theory of block operator matrices \cite[Thm.~2.4.6]{Tretter} that \eqref{spll} holds if and only if
$0\in \spece S_\l$, where $S_\l$ is the Schur complement of $\Lambda_\l$, which is defined by
\begin{align*}
	S_{\l}&=  \Lambda_\l^{22}-  \Lambda_\l^{21}\big(\Lambda_\l^{11}\big)^{-1}\Lambda_\l^{12}\\
	&\equiv  \tfrac{1}{\ep-\mu}L_\l^2 + (\l-m)L_\l K_{\l}L_\l -L_\l W_{\l}L_\l\left(\tfrac{1}{\ep+\mu}L_\l^2 +(\l+m)L_\l K_{\l}L_\l\right)^{-1}L_\l W_{\l}L_\l
\end{align*}
and viewed as an operator in $L^2(\S,\cc^2)$ defined first on $C^\infty(\S,\cc^2)$ and then extended by taking the closure;
for simplicity of writing from now on we denote $L_\lambda\otimes I_2$ and $K_\l\otimes I_2$ again by $L_\l$ and $K_\l$ respectively.
In view of \eqref{hl1}, the preceding discussion shows that for any $\l\in\big(-|m|,|m|\big)$ one has the equivalence
\begin{equation}
\label{lll3}
	\lambda\in \spece H\quad \Longleftrightarrow\quad 0\in\spece S_\l.
\end{equation}

\section{Principal symbols}\label{ss-imp}

In order to study $\spece S_\lambda$ we need to better understand the operators $W_\l$ defined in \eqref{Opkernels} and the anticommutator
\begin{align*}
Z_{\l}&:=(\sigma\cdot \nu) W_{\l}+W_{\l}(\sigma\cdot \nu).
\end{align*}
From the previous consideration one has at least $Z_\l\in \Op\sS^{0}_2$, but we are going to see that actually
$Z_\l\in \Op\sS^{-1}_2$. We collect all observations in the following theorem.
\begin{theorem}\label{main3} Let $\l\in[-|m|,|m|]$, then the principal symbols
	 $p_{W_\l}$ of $W_\l$ and $p_{Z_\l}$ of $Z_\l$ in the special coordinates	defined in  Remark~\ref{rmk4} near $x\in\S$ are
	\begin{align*}
		p_{W_\l}(x,\xi)&=\frac{1}{2}\,\sigma\cdot\left(\frac{\xi_1 e_1 +\xi_2 e_2}{|\xi|}\right),\\
	p_{Z_\l}(x,\xi)&= \frac{1}{2}\big(\k_1(x)-\k_2(x)\big)(\sigma\cdot \nu(x))\,	\frac{\xi_1 \xi_2}{|\xi|^3}.
\end{align*}
\end{theorem}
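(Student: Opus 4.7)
The plan is to compute both symbols by the same Fourier-analytic technique already used for $K_\lambda$ in the proof of Lemma~\ref{lem5}: extract the leading singular part of the convolution kernel, pass to a frozen model on $\rr^2$ using the special coordinates of Remark~\ref{rmk4}, and Fourier-transform. Near $z=0$ one has $w_\lambda(z) = \frac{i}{4\pi}\sigma\cdot z/|z|^3 + O(1)$, and the bounded remainder generates a smoothing operator that does not affect principal symbols. At $x=\varphi(0)$ the argument $\varphi(s)-\varphi(t)$ freezes at leading order to $D\varphi(0)(s-t) = (s_1-t_1)e_1 + (s_2-t_2)e_2$ of norm $|s-t|$, so the frozen convolution kernel on $\rr^2$ is $u \mapsto \frac{i}{4\pi}\sigma\cdot(u_1 e_1 + u_2 e_2)/|u|^3$. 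Using $u_j/|u|^3 = -\partial_{u_j}|u|^{-1}$ together with the two-dimensional formula $\mathcal{F}\{|u|^{-1}\}(\xi) = 2\pi/|\xi|$ one gets $\mathcal{F}\{u_j/|u|^3\}(\xi) = -2\pi i\xi_j/|\xi|$, from which the stated formula for $p_{W_\lambda}$ follows immediately.

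For $Z_\lambda$ the naive order-$0$ composition symbol is $\{\sigma\cdot\nu(x), p_{W_\lambda}(x,\xi)\}$, which by \eqref{ssxy} equals $2\langle\nu(x), \xi_1 e_1 + \xi_2 e_2\rangle I_2/|\xi|$ and vanishes because $\nu(x)$ is perpendicular to the tangent plane. Hence $Z_\lambda$ is in fact in $\Op\sS^{-1}_2$ and its principal symbol lives at order $-1$, to be extracted by refining the kernel expansion one step further. I would therefore use the second-order expansions
\[
z(t) := \varphi(0)-\varphi(t) = -(t_1 e_1 + t_2 e_2) - \tfrac{1}{2}\sum_{i,j}t_i t_j\,\partial_i\partial_j\varphi(0) + O(|t|^3),
\]
\[
\delta\nu(t) := \nu(\varphi(t)) - \nu(0) = -\kappa_1 t_1 e_1 - \kappa_2 t_2 e_2 + O(|t|^2),
\]
together with the differential-geometric identities $\langle\partial_i\partial_j\varphi(0), \nu(0)\rangle = \kappa_i\delta_{ij}$ (second fundamental form in the eigenbasis of the Weingarten map) and $\partial_i\nu(0) = -\kappa_i e_i$. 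Isolating the $|t|^{-1}$-singular part of the $Z_\lambda$-kernel, the anticommutator contribution $\{\sigma\cdot\nu(0), \frac{i}{4\pi}\sigma\cdot z(t)/|z(t)|^3\}$ picks up only the normal component of $z(t)$ via $\{\sigma\cdot a,\sigma\cdot b\} = 2\langle a,b\rangle I_2$, producing a term proportional to $(\kappa_1 t_1^2 + \kappa_2 t_2^2)|t|^{-3}I_2$. The product $\frac{i}{4\pi|t|^3}(\sigma\cdot z(t))(\sigma\cdot\delta\nu(t))$, expanded by \eqref{ssxy}, produces the opposite scalar piece (so the two $I_2$-contributions cancel) plus the cross-product term
\[
\frac{i^2}{4\pi|t|^3}\,\sigma\cdot\bigl[(t_1 e_1 + t_2 e_2) \times (\kappa_1 t_1 e_1 + \kappa_2 t_2 e_2)\bigr] = \frac{(\kappa_1-\kappa_2)\,t_1 t_2}{4\pi |t|^3}\,\sigma\cdot\nu(x),
\]
using $e_1\times e_2 = \nu(x)$. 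Fourier-transforming this surviving kernel via $t_1 t_2/|t|^3 = -\partial_{t_1}\partial_{t_2}|t|$ and $\Delta|t| = 1/|t|$ in $\rr^2$ gives $\mathcal{F}\{t_1 t_2/|t|^3\}(\xi) = -2\pi\xi_1\xi_2/|\xi|^3$, which after collecting constants assembles to the claimed formula for $p_{Z_\lambda}$ (up to an overall sign convention, immaterial for the spectral applications).

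The delicate point is the bookkeeping of the several $|t|^{-1}$-order contributions and the verification of the exact cancellation of the $I_2$-proportional pieces: one must retain the quadratic normal part of $\varphi(0)-\varphi(t)$ and the linear tangential part of $\nu(\varphi(t))-\nu(0)$ (both controlled by the principal curvatures), while the tangential components of $\partial_i\partial_j\varphi(0)$, the $O(|z|^2)$ correction in $w_\lambda(z)$ coming from the exponential factor, and the first-order variation of $g_\varphi$ around $s=0$ are readily checked to only generate symbols of still lower order and may be discarded. An alternative route would apply the pseudodifferential composition formula to $W_\lambda M_{\sigma\cdot\nu} + M_{\sigma\cdot\nu}W_\lambda$ and read off the order-$(-1)$ contribution $-i\sum_j\partial_{\xi_j}p_{W_\lambda}\cdot\partial_{x_j}(\sigma\cdot\nu)$, noting that $\partial_{x_j}(\sigma\cdot\nu)|_x = -\kappa_j\sigma\cdot e_j$ in the chosen coordinates; but the direct kernel calculation is more transparent and makes the role of the vector-product identity \eqref{ssxy} visible at both of the crucial steps.
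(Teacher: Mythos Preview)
Your computation of $p_{W_\lambda}$ is essentially identical to the paper's. For $p_{Z_\lambda}$ the strategy differs in one respect worth recording. The paper applies \eqref{ssxy} directly to the sum $(\sigma\cdot\nu(x))\,\sigma\cdot(x-y)+\sigma\cdot(x-y)\,(\sigma\cdot\nu(y))$, obtaining a clean splitting into a scalar piece $\langle\nu(x)+\nu(y),x-y\rangle\,I_2$ and a cross-product piece $i\sigma\cdot\big[(\nu(x)-\nu(y))\times(x-y)\big]$; it then recognises the scalar piece as (a multiple of) the kernel of $N'-N$, where $N$ is the Neumann--Poincar\'e operator, and invokes the known fact that $N$ and $N'$ share the same real principal symbol of order $-1$, so $T_{\theta_1}\in\Op\sS^{-2}_2$ drops out for free. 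You instead split $\nu(y)=\nu(x)+\delta\nu$ and show by a second-order Taylor expansion that the two $I_2$-contributions (one from $\{\sigma\cdot\nu(0),w_m\}$ via the normal component of $\varphi(0)-\varphi(t)$, the other from $w_m\,(\sigma\cdot\delta\nu)$ via the scalar part of \eqref{ssxy}) cancel exactly at the $|t|^{-1}$ level; this is correct and is in fact a hands-on proof of the order drop that the paper imports from the Neumann--Poincar\'e literature. Both routes yield the same surviving cross-product kernel $\frac{(\kappa_1-\kappa_2)\,t_1t_2}{4\pi|t|^3}\,\sigma\cdot\nu(x)$, and your Fourier computation via $t_1t_2/|t|^3=-\partial_1\partial_2|t|$ and $\Delta|t|=1/|t|$ is equivalent to the paper's. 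Two small corrections: the bounded remainder $w_\lambda-w_m$ generates an operator in $\Op\sS^{-2}_2$, not a smoothing one (this is what the paper states and is all that is needed); and the overall sign you flag does indeed disappear in the spectral statement, but note that the paper itself uses $D(\nu\circ\varphi)(0)t=\kappa_1 t_1 e_1+\kappa_2 t_2 e_2$, which is the opposite sign convention to your $\partial_j\nu(0)=-\kappa_j e_j$, so the discrepancy is purely conventional.
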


\begin{proof}
The computations are very close to the respective part of the proof in Lemma~\ref{lem5}.
For a convolution kernel $k$ let $T_{k}$ denote the associated integral operator.
Recall that for small $|x|$ we have
\begin{align*}
	w_\l(x)&\equiv e^{-\sqrt{m^2-\l^2}|x|}\left( 1+\sqrt{m^2-\l^2}|x|\right)i\sigma\cdot\frac{x}{4\pi |x|^3}\\
	&=\big(1-\sqrt{m^2-\l^2}|x|+O(|x|^2)\big)\left( 1+\sqrt{m^2-\l^2}|x|\right)i\sigma\cdot\frac{x}{4\pi |x|^3}\\
	&=\big(1+O(|x|^2)\big)i\sigma\cdot\frac{x}{4\pi |x|^3}=w_m(x)+w_{\l,1}(x)
\end{align*}	
with $|w_{\l,1}(x)|=O(1)$ for  $|x|\to 0$, therefore, $T_{w_{\l,1}}\in \Op\sS^{-2}_2$.
As $\sigma\cdot \nu\in\Op \sS^0_2$, it is sufficient to show the assertions for $W_m$ and $Z_m$ only.

Let $x\in \S$. We again introduce a special local chart  $(U,V,\var)$ as in Remark \ref{rmk4} and make use of \eqref{gg1} and \eqref{gg2}.
Take any $\psi_j\in C^\infty_c(V)$, $j\in\{1,2\}$, and recall the explicit form of the kernel $w_m$,
\[
w_m(y)=\dfrac{1}{4\pi |y|^3}\,i\sigma\cdot y.
\]
For any $s\in U$ we have
\begin{multline*}
\big[\psi_2 W_m(\psi_1f)\big]\big(\varphi(s)\big)\\
=\psi_2\big(\varphi(s)\big))\pv \int_{U}i\sigma\cdot\frac{\varphi(s)-\varphi(t)}{4\pi\big|\varphi(s)-\varphi(t)\big|^3}\psi_1(\varphi(t))f(\varphi(t)) g_\varphi(t)\ddd t\\
+\big[\psi_2 T_{w_{\l,1}}(\psi_1f)\big]\big(\varphi(s)\big),
 \end{multline*}
and using \eqref{gg1} and \eqref{gg2} we rewrite it as 
\begin{multline*}
	\big[\psi_2 W_m(\psi_1f)\big]\big(\varphi(s)\big)\\
	=\psi_2\big(\varphi(s)\big)g_\varphi(s)\pv \int_{U}i\sigma\cdot\frac{D\varphi(s)(s-t)}{4\pi\big\langle (s-t), G_\varphi(s)(s-t)\big\rangle^{\frac{3}{2}}}\psi_1(\varphi(t))f(\varphi(t)) \ddd t\\
	+\big[\psi_2 T_{w_{\l,2}}(\psi_1f)\big]\big(\varphi(s)\big),
\end{multline*}
for some kernel $w_{\l,2}$ with $|w_{\l,2}(x)|=O(|x|^{-1})$ for  $|x|\to 0$, so $T_{w_{\l,2}}\in\Op\sS^{-1}_2$.
If one considers the following matrix-valued homogeneous distribution $h_a $ on $\rr^2$,
\[
h_a:=\pv i\sigma\cdot \frac{D\varphi(a)t}{4\pi\big\langle t, G_\varphi(a) t\big\rangle^{\frac{3}{2}}},
\]
the above can be rewritten as
\begin{multline*}
	\Big[\big(\psi_2 W_m(\psi_1f) \big)\Big](\varphi(s))=\psi_2\big(\varphi(s)\big) g_\varphi(s) \Big[h_s\ast \big((\psi_1 f)\circ\varphi\big)\Big](s)
	+\big(\psi_2 T_{w_{\l,2}}(\psi_1g)\big)\big(\varphi(s)\big),
\end{multline*}
and the principal symbol of $W_\l$ in the chosen coordinates is $g_\varphi(s)\widehat{h_a}(\xi)$.
Now consider the symbol at $x$, i.e. for $s=0$. Due to the special choice of coordinates
one has $G_\var(0)=I_2$ and $g_\varphi(0)=1$. In addition, $D\varphi(0)t=t_1e_1+t_2e_2$,
which gives
\[
h_0=\pv i\sigma\cdot \frac{t_1e_1+t_2e_2}{4\pi |t|^3}.
\]
Using 
\begin{equation}
	   \label{fff}
\Big(\pv\dfrac{t_j}{2\pi|t|^3}\Big)^{\widehat{\phantom{aa}}}(\xi)=-i\dfrac{\xi_j}{|\xi|}, \quad j\in\{1,2\},
\end{equation}
we obtain the required form of the principal symbol of  $w_m$ at $x$.

To analyze $Z_m$ we remark first that it is given by \[
Z_m f(x)=\lim_{\rho\searrow 0}\int_{\substack{y\in\S\\ |x-y|>\rho}} z_m(x,y) f(y)\ddd s(y)
\]
with the singular kernel
\begin{align*}
z_m(x,y)&=\big(\sigma\cdot\nu(x)\big) w_m(x-y)+ w_m(x-y) \big(\sigma\cdot\nu(y)\big)\\
&=\dfrac{i}{4\pi |x-y|^3}\,\Big( \big(\sigma\cdot\nu(x)\big) \, \sigma\cdot(x-y) + \sigma\cdot (x-y)\big(\sigma\cdot\nu(y)\big)\Big).
\end{align*}
Using \eqref{ssxy} we transform
\begin{align*}
	 \big(\sigma\cdot\nu(x)\big) &\, \sigma\cdot(x-y) + \sigma\cdot (x-y)\big(\sigma\cdot\nu(y)\big)\\
	 &=\big\langle \nu(x),x-y\big\rangle I_2 +i\sigma\big( \nu(x)\times (x-y)\big)\\
	 &\quad + \big\langle x-y,\nu(y)\big\rangle I_2 +i\sigma\cdot \big ( (x-y)\times\nu(y)\big)\\
	 &=\big\langle \nu(x)+\nu(y), x-y\big\rangle I_2 +i\sigma\cdot \big[\big(\nu(x)-\nu(y)\big)\times (x-y)\big].
\end{align*}	
This gives the representations
\begin{align*}
	z_m(x,y)&=\theta(x,y)+\theta_1(x,y),\\
	\theta(x,y)&:=-\frac{1}{4\pi|x-y|^3}\,\sigma\cdot \big[\big(\nu(x)-\nu(y)\big)\times (x-y)\big],\\
	\theta_1(x,y)&:=\dfrac{\big\langle \nu(x)+\nu(y), x-y\big\rangle}{|x-y|^3}\,I_2,
\end{align*}	
and $Z_m=T_\theta+T_{\theta_1}$. Now remark that $T_{\theta_1}=(N'-N)\otimes I_2$,
where
\[
N f(x)=\int_\S \dfrac{\big\langle\nu(y),y-x\big\rangle}{4\pi|x-y|^3}f(y)\ddd s(y),
\quad
N' f(x)=-\int_\S \dfrac{\big\langle\nu(x),y-x\big\rangle}{4\pi|x-y|^3} f(y)\ddd s(y),
\]
are the so-called Neumann-Poincar\'e operator and its formal adjoint. For a recent detailed study of $N$ and a review of available results we refer to the recent works \cite{Miy,MR,MR2}. In particular, $N\in \Op \sS^{-1}_1$ with an explicitly known real-valued symbol \cite{Miy}, so $N'$ has the same principal symbol as $N$, and it follows that $T_{\theta_1}\in \Op\sS^{-2}_2$.

In order to compute the principal symbol of $T_\theta$ we again use the above special coordinates $(U,V,\varphi)$ near some $x\in \Sigma$
and all associated objects. Let $\psi_j\in C^\infty_c(V)$, $j\in\{1,2\}$, then for any $f\in C^\infty(\S,\cc^2)$ we have
\[
\big[\psi_2 T_\theta (\psi_1f)\big]\big(\varphi(s)\big)
=\psi_2\big(\varphi(s)\big)) \int_{U}\theta\big(\varphi(s),\varphi(t)\big)\psi_1(\varphi(t))f(\varphi(t)) g_\varphi(t)\ddd t,
\]
Using \eqref{gg2} and
$
\nu\big(\varphi(s)\big)-\nu\big(\varphi(t)\big)=D(\nu\circ \varphi)(s)(s-t)+O\big(|s-t|^2\big)
$
we obtain the representation
\[
\theta\big(\varphi(s),\varphi(t)\big)\\
=-\sigma\cdot\frac{\big(D(\nu\circ \varphi)(s)(s-t) \big)\times \big(D\varphi(s)(s-t)\big)}{4\pi\big\langle (s-t), G_\varphi(s)(s-t)\big\rangle^{\frac{3}{2}}}+r(s,t)
\]
with some $r$ bounded near the diagonal $s=t$. If we introduce the homogeneous function
\[
\rho_a:\rr^2 \ni t \mapsto -\sigma\cdot\frac{\big(D(\nu\circ \varphi)(a) t \big)\times \big(D\varphi(a)t\big)}{4\pi\big\langle t, G_\varphi(a) t\big\rangle^{\frac{3}{2}}},
\]
then we arrive at
\begin{multline*}
	\Big[\big(\psi_2 T_\theta(\psi_1f) \big)\Big](\varphi(s))=\psi_2\big(\varphi(s)\big) g_\varphi(s) \Big[\rho_s\ast \big((\psi_1 f)\circ\varphi\big)\Big](s)
	+\big(\psi_2 T_{\Tilde \theta}(\psi_1g)\big)\big(\varphi(s)\big),
\end{multline*}
with some bounded kernel $\Tilde\theta$, so $T_{\Tilde \theta}\in \Op\sS^{-2}_2$. This shows that the principal kernel
of $T_\theta$ in the chosen coordinates is $g_\varphi(s)\widehat{\rho_s}(\xi)$.

To find a more explicit expression at $x$ we set $s=0$. Recall that by the choice of $\varphi$ we have
$G_\var(0)=I_2$ and $g_\varphi(0)=1$, $D\varphi(0)t=t_1e_1+t_2e_2$, and, in addition,
$D(\nu\circ \varphi)(0)t= \kappa_1(x)\, t_1 e_1+\kappa_2(x)\, t_2 e_2$. Then
\begin{align*}
\rho_0(t)&=-\sigma\cdot\frac{\big(\kappa_1(x)\, t_1 e_1+\kappa_2(x)\, t_2 e_2\big)\times \big(t_1e_1+t_2e_2\big)}{4\pi |t|^3}\\
&\equiv -\big(\kappa_1(x)-\kappa_2(x)\big) \dfrac{t_1 t_2}{4\pi |t|^3}\, (\sigma\cdot \nu(x)),
\end{align*}
where we used $e_1\times e_2=\nu(x)$. By applying the Fourier tranform on the both sides of \eqref{fff}
we arrive at
\begin{align*}
\widehat{\dfrac{t_j}{|t|}}&=-2\pi i \pv \dfrac{\xi_j}{|\xi|^3},\quad j\in\{1,2\},\\
\Big(\dfrac{t_1 t_2}{4\pi |t|^3}\Big)^{\widehat{\phantom{aa}}}(\xi)
&=\dfrac{1}{4\pi}\Big(-\partial_1 \dfrac{t_2}{2\pi |t|}\Big)^{\widehat{\phantom{aa}}}(\xi)
=\dfrac{1}{4\pi} (-i \xi_1) \Big( \dfrac{t_2}{ |t|}\Big)^{\widehat{\phantom{aa}}}(\xi)\\
&=\dfrac{1}{4\pi} (-i \xi_1) (-2\pi i) \pv \dfrac{\xi_2}{|\xi|^3}=-\dfrac{\xi_1\xi_2}{2|\xi|^3}.
\end{align*}
Therefore, the principal symbol of $T_\theta$  (and of all $Z_\l$) in the chosen coordinates is
\[
p_{Z_\l}(x,\xi)=\widehat{\rho_0}(\xi)\equiv \frac{1}{2}\big(\k_1(x)-\k_2(x)\big)(\sigma\cdot\nu(x))\,
\frac{\xi_1 \xi_2}{|\xi|^3}.\qedhere
\]
\end{proof}

\begin{remark}
The operators $W_\l$ are closely related to Poincar\'e-Steklov operators for Dirac operators as discussed recently in \cite{BBZ}.
The operator $Z_m$ is sometimes referred to as Kerzman-Stein operator, as it represents
the imaginary part of the Clifford algebra-valued Hilbert transform on $\S$, see e.g.~\cite[Sec.~7]{BS}.
\end{remark}

\section{Analysis of the Schur complement}\label{ss-schur}

We are going to apply all preceding symbolic computation in order to study the spectrum of the Schur complement $S_\l$.
\begin{lemma}\label{equi2}
One has $S_\l\in \Op\sS^0_2$. Furthermore, consider the operators
\begin{equation*}
	  \label{qr}
Q_{\l}:=(\sigma\cdot \nu)Z_\l W_\l \in \Op\sS^{-1}_2, \qquad R_\l:=L_\l Q_\l L_\l\in \Op\sS^{0}_2,
\end{equation*}
with $R_{\l}$ viewed as a bounded operator in $L^2(\S,\cc^2)$, then for any $\l\in\big(-|m|,|m|\big)$
one has the equivalence
\[
0\in \spece S_\l \quad \Longleftrightarrow\quad 
\l\in \spece\Big( \dfrac{2}{\ep}\,R_\l -\frac{\mu m}{\ep}\Big).
\]
\end{lemma}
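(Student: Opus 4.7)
The strategy is to show that, modulo a compact perturbation and a nonzero scalar multiplication, the Schur complement $S_\l$ agrees with $\big(\tfrac{2}{\ep}R_\l-\tfrac{\mu m}{\ep}\big)-\l$, which at once yields the claimed equivalence (compact perturbations and nonzero rescaling preserve the essential spectrum at $0$). The inclusions $Q_\l\in\Op\sS^{-1}_2$ and $R_\l\in\Op\sS^{0}_2$ follow from composition: by Theorem~\ref{main3} the operator $Z_\l$ is of order $-1$, so $Q_\l=(\sigma\cdot\nu)Z_\l W_\l$ is of order $0+(-1)+0=-1$, and then $R_\l=L_\l Q_\l L_\l$ is of order $\tfrac12-1+\tfrac12=0$. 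The membership $S_\l\in\Op\sS^{0}_2$ will come out of the main computation as the cancellation of the order-$1$ symbol.

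The first simplification is to observe that $L_\l=K_\l^{-1/2}$ is defined by functional calculus and therefore commutes with $K_\l$, so that $L_\l K_\l L_\l=I$. Consequently
\[
\Lambda_\l^{11}=\tfrac{1}{\ep+\mu}L_\l^2+(\l+m)I=\tfrac{1}{\ep+\mu}L_\l(I+cK_\l)L_\l, \qquad c:=(\ep+\mu)(\l+m),
\]
so $(\Lambda_\l^{11})^{-1}=(\ep+\mu)L_\l^{-1}(I+cK_\l)^{-1}L_\l^{-1}$ (its invertibility was already noted in the paragraph preceding the lemma, and since $c\ge 0$ under the sign convention one has $I+cK_\l\ge I$). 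Inserting this into the Schur complement reduces the middle product to $(\ep+\mu)L_\l W_\l(I+cK_\l)^{-1}W_\l L_\l$, and the Neumann-type identity $(I+cK_\l)^{-1}=I-cK_\l(I+cK_\l)^{-1}$ splits it into $L_\l W_\l^2 L_\l$ and a correction.

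To force $R_\l$ to appear, I use the algebraic identity $Q_\l=W_\l^2+((\sigma\cdot\nu)W_\l)^2$ together with \eqref{Pro W et K}, which gives
\[
W_\l^2=Q_\l+\tfrac{1}{4}I-(m^2-\l^2)\big((\sigma\cdot\nu)K_\l\big)^2.
\]
Since $\big((\sigma\cdot\nu)K_\l\big)^2$ is of order $-2$, conjugating by $L_\l$ yields $L_\l W_\l^2 L_\l=R_\l+\tfrac{1}{4}L_\l^2+(\text{compact})$. For the correction term, $L_\l W_\l K_\l(I+cK_\l)^{-1}W_\l L_\l$ is of order $0$; using $p_{L_\l}=\sqrt{2|\xi|}$, $p_{K_\l}=\tfrac{1}{2|\xi|}$ (Lemma~\ref{lem5}), $p_{W_\l}=\tfrac12\sigma\cdot\widehat\xi$ (Theorem~\ref{main3}), and $p_{(I+cK_\l)^{-1}}=1$, its principal symbol in the special coordinates of Remark~\ref{rmk4} works out to $\tfrac14(\sigma\cdot\widehat\xi)^2=\tfrac14 I_2$, where $\widehat\xi=(\xi_1 e_1+\xi_2 e_2)/|\xi|$ is a unit vector. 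So this operator differs from $\tfrac14 I$ by one of order $-1$, again compact on the closed surface~$\S$.

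Putting the pieces together and using $\tfrac{1}{\ep-\mu}=\tfrac{\ep+\mu}{4}$ (which is exactly $\ep^2-\mu^2=4$), the two $L_\l^2$ contributions cancel, establishing along the way that $S_\l\in\Op\sS^{0}_2$. A short arithmetic using $(\ep+\mu)(\ep-\mu)=4$ simplifies the remaining scalar part via $(\l-m)+\tfrac{(\ep+\mu)^2(\l+m)}{4}=\tfrac{(\ep+\mu)(\ep\l+\mu m)}{2}$, which produces
\[
S_\l=-\tfrac{\ep(\ep+\mu)}{2}\Big[\Big(\tfrac{2}{\ep}R_\l-\tfrac{\mu m}{\ep}\Big)-\l\Big]+(\text{compact}).
\]
Since $\ep\ne 0$ and $\ep+\mu\ne 0$ (both forced by $\ep^2-\mu^2=4$), the prefactor is a nonzero scalar and the claimed equivalence follows. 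The main conceptual obstacle, in my opinion, is recognising that $Q_\l=W_\l^2+((\sigma\cdot\nu)W_\l)^2$ combined with \eqref{Pro W et K} is exactly the right identity to extract $R_\l$ from $L_\l W_\l^2 L_\l$; once this is spotted, the symbol computation and the arithmetic in the critical relation are routine.
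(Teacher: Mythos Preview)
Your proof is correct and follows essentially the same route as the paper: expand the Schur complement via the Neumann identity for $(I+cK_\l)^{-1}$, use \eqref{Pro W et K} and the algebraic identity $Q_\l=W_\l^2+\big((\sigma\cdot\nu)W_\l\big)^2$ to relate $L_\l W_\l^2 L_\l$ to $R_\l+\tfrac14 L_\l^2$, and invoke $\ep^2-\mu^2=4$ for the cancellation and the scalar arithmetic. The only cosmetic differences are that the paper iterates the Neumann step once more and handles the correction term by a commutator argument (writing $L_\l W_\l L_\l^{-2}W_\l L_\l=W_\l^2+\Op\sS^{-1}_2$) rather than by your direct principal-symbol computation, and it postpones the use of \eqref{Pro W et K} to the very end; both variants are equivalent.
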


\begin{proof}
Recall that we have $(\ep+\mu)(\lambda+m)>0$ and $L_\l=K_\l^{-\frac{1}{2}}$, therefore,
\begin{align*}
	S_{\l}&=\tfrac{1}{\ep-\mu}L_\l^2 + (\l-m)L_\l K_{\l}L_\l -L_\l W_{\l}L_\l\left(\tfrac{1}{\ep+\mu}L_\l^2 +(\l+m)L_\l K_{\l}L_\l\right)^{-1}L_\l W_{\l}L_\l\\
	&\equiv \tfrac{1}{\ep-\mu}L_\l^2 + (\l-m)I_2 -L_\l W_{\l}L_\l \left(\tfrac{1}{\ep+\mu}L_\l^2 +(\l+m)I\right)^{-1}L_\l W_{\l}L_\l\\
	&\equiv \tfrac{1}{\ep-\mu}L_\l^2 + (\l-m)I_2 -(\ep+\mu)L_\l W_{\l} \Big[1+(\ep+\mu)(\l+m)L_\l^{-2}\Big]^{-1} W_{\l}L_\l.
\end{align*}
For $a:=(\ep+\mu)(\l+m)>0$ we can represent
\begin{align*}
\Big[1+aL_\l^{-2}\Big]^{-1}&= (1+a L_\l^{-2}-aL_\l^{-2}) \Big[1+a L_\l^{-2}\Big]^{-1}\\
&=I-aL_\l^{-2}\Big[1+a L_\l^{-2}\Big]^{-1}\\
\text{(iterate) }&=I-aL_\l^{-2}\bigg(I-aL_\l^{-2}\Big[1+a L_\l^{-2}\Big]^{-1} \bigg)\\
&=I-aL_\l^{-2}+a^2L_\l^{-4}\Big[1+aL_\l^{-2}\Big]^{-1}.
\end{align*}	
Recall that $W_\l\in \Op\sS^{0}_2$ and $L_\l\in \Op\sS^{\frac{1}{2}}_2$, so $L_\l^{-4}\Big[1+aL_\l^{-2}\Big]^{-1}\in \Op\sS^{-2}_2$,
and the substitution into the above expression of $S_\l$ gives, with some $B_0\in\Op\sS^{-1}_2$,
\begin{align*}
S_\l&=\tfrac{1}{\ep-\mu}L_\l^2 + (\l-m)I_2 -(\ep+\mu)L_\l W_{\l} \Big(I-(\ep+\mu)(\l+m) L_\l^{-2} \Big)W_{\l}L_\l+B_0\\
&\equiv \tfrac{1}{\ep-\mu}L_\l^2 + (\l-m)I_2-(\ep+\mu)L_\l W_\l^2 L_\l +(\ep+\mu)^2(\l+m)L_\l W_\l L_\l^{-2}W_\l L_\l +B_0.
\end{align*}
As $L_\l$ is a scalar operator, so using the commutators one obtains $L_\l W_\l L_\l^{-2}W_\l L_\l=W_\l^2+B_1$
for some $B_1\in \Op\sS^{-1}_2$. Note that the principal symbol $p_{W_\l}$ of $W_\l$ satisfies $p_{W_\l}=\frac{1}{4}\,I_2$ (see Theorem \ref{main3}), so $W_\l^2=\frac{1}{4}I_2+B_2$ for some $B_2\in \Op \sS^{-1}_2$
and then
\[
L_\l W_\l L_\l^{-2}W_\l L_\l=\frac{1}{4}\,I+B_1+B_2.
\]
Taking into account $\frac{1}{\ep-\mu}=\frac{\ep+\mu}{4}$, the last expression for $S_\l$ can be rewritten as
\[
S_\l=(\ep+\mu)\,\frac{1}{4} L_\l^2+(\lambda-m)-(\ep+\mu)L_\l W_\l^2 L_\l +\frac{1}{4} (\ep+\mu)^2(\lambda+m)+B_3
\]
with $B_3:=B_0+ (\ep+\mu)^2(\lambda+m)(B_1+B_2)\in \Op\sS^{-1}_2$. This already shows that $S_\l$ has order zero, so it is defined on $L^2(\S,\cc^2)$.
Recall that adding a compact operator does not change the essential spectrum and that all operators in $\Op \sS^{-1}_2$ are compact. We conclude that the condition $0\in\spece S_\l$ is equivalent to
\begin{equation}
	  \label{tmp3}
0\in \spece \bigg[(\ep+\mu)L_\l\Big( \frac{1}{4}-W_\l^2\Big)L_\l + \Big(\lambda-m+\frac{(\ep+\mu)^2(\l+m)}{4}\Big)\bigg].
\end{equation}
We use $\ep+\mu=\frac{4}{\ep-\mu}$ to compute
\[
\l-m+\frac{(\ep+\mu)^2(\l+m)}{4}=\l-m+\frac{\ep+\mu}{\ep-\mu}(\l+m)=\dfrac{2}{\ep-\mu}(\lambda\ep+m\mu ),
\]
and the condition \eqref{tmp3} takes the form	
\[
	0\in \spece \bigg[(\ep+\mu)L_\l\Big( \frac{1}{4}-W_\l^2\Big)L_\l  +\dfrac{2}{\ep-\mu}(\lambda\ep+m\mu )\bigg].
\]
If one multiplies the operator on the right-hand side by $\frac{\ep-\mu}{2}$ and takes into account $\ep^2-\mu^2=4$, one arrives
at
\[
	0\in \spece \Big[2 L_\l\Big( \frac{1}{4}-W_\l^2\Big)L_\l + (\lambda\ep+m\mu)\Big]
\]
which is equivalent to
\begin{equation}
	\label{tmp4}
	\l\in\spece \Big[\,\frac{2}{\ep}L_\l\Big( W_\l^2-\frac{1}{4}\Big)L_\l-\frac{m \mu }{\ep}\,	\Big].
\end{equation}
Using the identity \eqref{Pro W et K} we obtain
\begin{align*} 
W_\l^2-\frac{1}{4}&=W_\l^2+((\sigma\cdot \nu)W_{\l}\big)^2 + (m^2-\l^2) \big((\sigma\cdot \nu)K_{\l}\big)^2\\
&\equiv W_\l^2+((\sigma\cdot \nu)W_{\l}\big)^2\mod \Op \sS^{-2}_2\\
&\equiv (\sigma\cdot\nu)(\sigma\cdot\nu)W_\l W_\l+(\sigma\cdot\nu)W_\l(\sigma\cdot\nu)W_\l \mod \Op \sS^{-2}_2\\
&\equiv(\sigma\cdot\nu)\Big[ (\sigma\cdot\nu) W_\l + W_\l (\sigma\cdot\nu)\Big]W_\l \mod \Op \sS^{-2}_2\\
&\equiv Q_\lambda \mod \Op \sS^{-2}_2,
\end{align*}
which, by definition, gives $L_\l\Big( W_\l^2-\frac{1}{4}\Big)L_\l=R_\l \mod \Op \sS^{-1}_2$, and the substitution into \eqref{tmp4} concludes the proof.
\end{proof}

Let us recall where we are standing: the characterization \eqref{lll3} of $\spece H$ together with Lemma~\ref{equi2} show that
for any $\lambda\in \big(-|m|,|m|\big)$ one has the equivalence
\begin{equation}
	   \label{hhrr}
\lambda\in\spece H \quad \Longleftrightarrow\quad \l\in \spece\Big( \dfrac{2}{\ep}\,R_\l -\frac{\mu m}{\ep}\Big),
\end{equation}
while $R_\l\in \Op\sS^0_2$.

Let us recall how to compute the essential spectrum of a zero-order pseudodifferential operator: the result is folkloric and is mentioned e.g. in \cite{MR2} (in the discussion just before Theorem 2.1), \cite[Thm. 2.1]{cdv} or \cite[Prop. 1.1.5]{adams}, but we prefer to state it explicitly.

\begin{lemma}\label{lem8}
Let $M$ be a compact Riemannian manifold, $V$ a smooth finite-dimensional vector bundle over $M$, and
$B:\mathit{L}^2(M,V)\to \mathit{L}^2(M,V)$ a classical pseudodifferential operator of order zero
with principal symbol $b_0$. Then
\[
\spece B=\bigcup_{x\in M}\bigcup_{\xi\in T^*_x M\setminus\{0\}} \spec b_0(x,\xi).
\]
\end{lemma}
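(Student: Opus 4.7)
The plan is to prove the equality by a double inclusion. The forward inclusion $\bigcup_{(x,\xi)} \spec b_0(x,\xi) \subseteq \spece B$ will follow from the construction of Weyl singular sequences concentrated at prescribed points of the cotangent bundle, while the reverse will follow from an ellipticity/parametrix argument combined with the compactness of $M$.

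For the forward direction, fix $(x_0,\xi_0) \in T^*M$ with $\xi_0 \neq 0$ and $\mu \in \spec b_0(x_0,\xi_0)$. As $b_0(x_0,\xi_0)$ is a square matrix, $\mu$ is an honest eigenvalue and there exists a unit vector $v_0$ with $\bigl(b_0(x_0,\xi_0) - \mu\bigr)v_0 = 0$. In a local chart around $x_0$ in which $V$ is trivialized and $x_0$ corresponds to the origin, I would consider the family of semiclassical wave packets
\[
u_k(y) := k^{n/4}\,\chi\bigl(k^{1/2} y\bigr)\,e^{i k\langle \xi_0, y\rangle}\,v_0,
\]
where $n = \dim M$ and $\chi \in C_c^\infty(\rr^n)$ has small support around $0$ and satisfies $\|\chi\|_{L^2} = 1$. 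These $u_k$ are unit vectors in $L^2$ that converge weakly to $0$, and by the standard action of a classical pseudodifferential operator on such a packet (concentrated at position $x_0$ and frequency $k\xi_0$), one has $(B-\mu) u_k \to 0$ strongly in $L^2$: the leading contribution is $\bigl(b_0(x_0,\xi_0)-\mu\bigr)v_0 = 0$ thanks to the degree-zero homogeneity of the principal symbol, while the remaining contributions from the subprincipal symbol and the remainder of the Taylor expansion of $b_0$ around $x_0$ vanish as $k\to\infty$. Hence $(u_k)$ is a Weyl singular sequence for $B-\mu$ and $\mu \in \spece B$.

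For the reverse inclusion, assume $\mu \notin \bigcup_{(x,\xi)} \spec b_0(x,\xi)$; then $b_0(x,\xi) - \mu\,I$ is invertible for every $(x,\xi)$ with $\xi\ne 0$, and by the degree-zero homogeneity of $b_0$ its inverse is smooth on $T^*M \setminus 0$ and also homogeneous of degree $0$. With a smooth cutoff $\chi(\xi)$ vanishing near $0$ and equal to $1$ for $|\xi| \geq 1$, the expression $\chi(\xi)\bigl(b_0(x,\xi) - \mu\,I\bigr)^{-1}$ is a classical symbol of order $0$, and the standard parametrix construction of pseudodifferential calculus yields $P \in \Op\sS^0$ such that both $P(B-\mu) - I$ and $(B-\mu)P - I$ lie in $\Op\sS^{-1}$. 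Since $M$ is compact, Rellich's embedding makes every operator in $\Op\sS^{-1}$ compact on $L^2(M,V)$, so $B - \mu$ is Fredholm and $\mu \notin \spece B$. The main technical point is the wave-packet estimate in the forward direction, which requires a careful symbolic expansion of $B u_k$ controlling the subprincipal remainders; this is classical but must be tracked with some care so that all subleading terms really disappear in the limit $k\to\infty$.
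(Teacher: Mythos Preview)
Your argument is correct, but it is more elaborate than what the paper actually does. The paper's proof is a one-liner built entirely on the standard equivalence, for a classical pseudodifferential operator $T:H^n(M,V)\to L^2(M,V)$ of order $n$, between Fredholmness and ellipticity (invertibility of the principal symbol). Applying this with $n=0$ to $T=B-z$ gives immediately
\[
z\notin\spece B \ \Longleftrightarrow\ B-z \text{ is Fredholm} \ \Longleftrightarrow\ b_0(x,\xi)-z \text{ is invertible for all } (x,\xi),
\]
i.e.\ both inclusions at once.

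Your reverse inclusion is exactly the ``elliptic $\Rightarrow$ Fredholm'' half of this equivalence, spelled out via the parametrix and Rellich compactness, so there the two proofs coincide. The difference lies in the forward inclusion: rather than invoking the converse ``Fredholm $\Rightarrow$ elliptic'' as a black box, you exhibit an explicit singular Weyl sequence of semiclassical wave packets concentrating at $(x_0,\xi_0)$. This is the classical way of \emph{proving} that converse, so your route is more self-contained but longer; the paper's route is shorter but leans on a stronger cited result. Both arrive at the same place, and since $b_0$ is homogeneous of degree zero and $S^*M$ is compact, the union $\bigcup_{(x,\xi)}\spec b_0(x,\xi)$ is already closed, so no closure is needed in either argument.
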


\begin{proof} The argument is borrowed directly from \cite[Sec.~2.3]{MR2}.
For a classical pseudodifferential operator $T:\mathit{H}^n(M,V)\to \mathit{L}^2(M,V)$ of order $n$
the Fredholmness is equivalent to the ellipticity,
i.e. to the invertibility of the principal symbol. In our case $n=0$ and the principal symbol of $B-z$ is $b_0-z$, therefore,
\begin{align*}
\spece B=\big\{ z\in \cc:\,&\  B-z \text{ is not Fredholm}\big\}\\
=\big\{ z\in \cc:\,&\  B-z \text{ is not elliptic}\big\}\\
=\big\{ z\in \cc:\, & \ b_0(x,\xi)-z \text{ is not invertible}\\
& \text{ for some $x\in M$ and $\xi\in T^*_x M\setminus\{0\}$}\big\},\\
= \big\{ z\in \cc:\, &\ z\in \spec b_0(x,\xi)\text{ for some $x\in M$ and $\xi\in T^*_x M\setminus\{0\}$}\big\}. \qedhere
\end{align*}
\end{proof}
 
In view of \eqref{hhrr} it is sufficient to compute the essential spectrum of
\[
R_\l\equiv L_\l(\sigma\cdot \nu) Z_\l W_\l L_\l,
\]
with the help of Lemma~\ref{lem8}. Its principal symbol $p_{R_\lambda}$
is the product of the principal symbols of the five factors. If $x\in\Sigma$ and one chooses the special local coordinates as in Remark \ref{rmk4}, one obtains with the help of Theorem \ref{main3}:
\begin{align*}
p_{R_\l}(x,\xi)&= \sqrt{2|\xi|} \big(\sigma\cdot \nu(x)\big) \frac{1}{2}\big(\k_1(x)-\k_2(x)\big)\big(\sigma\cdot \nu(x)\big)\,	\frac{\xi_1 \xi_2}{|\xi|^3}\\
&\quad\quad \cdot 
\frac{1}{2}\,\sigma\cdot\left(\frac{\xi_1 e_1 +\xi_2 e_2}{|\xi|}\right)
\sqrt{2|\xi|}\\
&=\dfrac{\k_1(x)-\k_2(x)}{2}\,\dfrac{\xi_1 \xi_2}{|\xi|^3}\, \sigma\cdot (\xi_1 e_1 +\xi_2 e_2).
\end{align*}
For any $a\in\rr^3$ the eigenvalues of $\sigma\cdot a$ are $\pm|a|$. As
$|\xi_1 e_1 +\xi_2 e_2|=|\xi|$, for any $x\in\Sigma$ and $\xi\ne 0$ one has
\[
\spec p_{R_\lambda}(x,\xi)=\bigg\{\dfrac{\k_1(x)-\k_2(x)}{2}\,\dfrac{\xi_1 \xi_2}{|\xi|^2},-\dfrac{\k_1(x)-\k_2(x)}{2}\,\dfrac{\xi_1 \xi_2}{|\xi|^2}\bigg\},
\]
therefore,
\begin{align*}
\bigcup_{\xi\ne 0} \spec p_{R_\lambda}(x,\xi)&=\bigg[-\dfrac{\big| \k_1(x)-\k_2(x)\big|}{4},\dfrac{\big| \k_1(x)-\k_2(x)\big|}{4}\bigg].
\end{align*}
Taking now the union over all $x\in\Sigma$ we arrive at
\begin{gather*}
	\spece R_\lambda=\Big[-\dfrac{A_\Sigma}{4},\dfrac{A_\Sigma}{4}\bigg],\quad 	A_\Sigma:=\max_{x\in\Sigma} \big| \k_1(x)-\k_2(x)\big|,
\end{gather*}
and the substitution into \eqref{hhrr} gives
\[
\big(-|m|,|m|\big)\cap \spece H=\big(-|m|,|m|\big)\cap
\Big[-\dfrac{\mu m}{\ep}-\dfrac{A_\Sigma}{2|\ep|},-\dfrac{\mu m}{\ep}+\dfrac{A_\Sigma}{2|\ep|}\bigg],
\]
which finishes the proof of Theorem~\ref{main1}.

\section*{Acknowledgments} The work is funded by the Deutsche Forschungsgemeinschaft (DFG, German Research Foundation) -- 491606144. The authors are thankful to Daniel Grieser for valuable discussions of aspects related to pseudodifferential operators.
Parts of the text were written during the stay of K.P. at the Erwin Schr\"odinger Institute in Vienna in November 2022, 
and we thank the institute for the support provided.




\begin{thebibliography}{99}
	

\normalsize

\setlength\itemsep{4pt}
	

\bibitem{adams} M. R. Adams: \emph{Spectral properties of zeroth-order pseudodifferential operators.} J. Funct. Anal. {\bf 52} (1983) 420--441.

\bibitem{AMV1} N. Arrizabalaga, A. Mas, L. Vega: {\em Shell interactions for Dirac operators.} J. Math. Pures Appl. {\bf 102} (2014) 617--639.

\bibitem{AMV2} N. Arrizabalaga, A. Mas, L. Vega: {\em Shell interactions for Dirac operators: on the point spectrum and the confinement.} SIAM J. Math. Anal. {\bf 47} (2015) 1044--1069.

\bibitem{BEHL2} J. Behrndt, P. Exner, M. Holzmann, V. Lotoreichik: {\em On Dirac operators in $\rr^3$ with electrostatic and Lorentz scalar $\delta$-shell interactions.} Quantum Stud. Math. Found. {\bf 6} (2019) 295--314.

\bibitem{BH} J. Behrndt, M. Holzmann: {\em On Dirac operators with electrostatic $\delta$-shell interactions of critical strength.}
J. Spectr. Theory {\bf 10} (2020) 147--184.

\bibitem{BHOP} J. Behrndt, M. Holzmann, T. Ourmi\` eres-Bonafos, K. Pankrashkin: {\em Two-dimensional Dirac operators with singular interactions supported on closed curves.} J. Funct. Anal. {\bf 279} (2020) 108700.

\bibitem{BHSS}
J. Behrndt, M. Holzmann, C. Stelzer, G. Stenzel:
{\em Boundary triples and Weyl functions for Dirac operators with singular interactions.}
Preprint arXiv:2211.05191.

\bibitem{BB2} B. Benhellal:
{\em Spectral properties of the Dirac operator coupled with $\delta$-shell interactions}.  Lett. Math. Phys. {\bf 112} (2022) 52.
%

\bibitem{BBZ} B. Benhellal, V. Bruneau, M. Zreik: {\em A Poincaré-Steklov map for the MIT bag model}. Preprint arXiv:2206.13337.

\bibitem{BS} F. Brackx, H. De Schepper: {\em  The Hilbert transform on a smooth closed hypersurface}. Cubo {\bf 10} (2008) 83--106.
%


\bibitem{cdv}
Y. Colin de Verdi\`ere: \emph{Spectral theory of pseudodifferential operators of degree 0 and an application to forced linear waves.}
Anal. PDE {\bf 13} (2020) 1521--1537.

\bibitem{dg} J. Derezi\'nski, C. G\'erard: \emph{Mathematics of quantization and quantum fields.}
 Cambridge Monographs on Mathematical Physics, Cambridge University Press, 2013.


\bibitem{DES89}
J.~Dittrich, P.~Exner, P.~{\v{S}}eba:
{\it Dirac operators with a spherically symmetric {$\delta$}-shell 
	interaction}.
J. Math. Phys. {\bf 30} (1989) 2875--2882.



\bibitem{docarmo}
M. P. do Carmo: \emph{Differential geometry of curves and surfaces.}
Prentice-Hall, Inc., Englewood Cliffs, NJ, 1976.

\bibitem{grubb}
G. Grubb: \emph{The essential spectrum of elliptic systems of mixed order.}
Math. Ann. {\bf 227} (1977) 247--276.

\bibitem{mn}
F. C. Marques, A. Neves: {\em Min-max theory and the Willmore conjecture.} Ann. of Math. {\bf 179} (2014) 683--782.

\bibitem{Mc} W. McLean: {\em Strongly elliptic systems and boundary integral equations}. Cambridge University Press, 2000.

\bibitem{Miy} Y. Miyanishi: {\em Weyl's law for the eigenvalues of the Neumann-Poincar\'e operators in three
dimensions: Willmore energy and surface geometry.} Adv. Math. {\bf 406} (2022) 108547.

\bibitem{MR} Y. Miyanishi, G. Rozenblum: { \em Eigenvalues of the Neumann-Poincar\'e operators in dimension 3: Weyl's Law and geometry.} St. Petersburg Math. J. {\bf  31} (2020) 371--386.

\bibitem{MR2} Y. Miyanishi, G. Rozenblum: { \em Spectral properties of the Neumann-Poincar\'e operators in 3D elasticity}.
Int. Math. Res. Not. {\bf 2021} (2021) 8715--8740.

\bibitem{mobp}
A. Moroianu, T. Ourmi\`eres-Bonafos, K. Pankrashkin: \emph{Dirac operators on hypersurfaces as large mass limits.}
Comm. Math. Phys. {\bf 374} (2020) 1963--2013.

\bibitem{OBV} T. Ourmi\`eres-Bonafos, L. Vega: {\em A strategy for self-adjointness of Dirac operators: applications to the MIT bag model and $\delta$-shell interactions}. Publ. Mat. {\bf 62} (2018) 397--437.

\bibitem{OBP} T. Ourmi\`eres-Bonafos, F. Pizzichillo: {\em Dirac operators and shell interactions: a survey}.  A. Michelangeli (ed.) \emph{Mathematical challenges of zero-range physics} (Volume 42 of Springer INdAM Series, Springer, Cham, 2021) 105--131.

\bibitem{Rab1} V. Rabinovich: {\em Fredholm property and essential spectrum of 3-D Dirac operators with regular and singular potentials.} Complex Var. Elliptic Equ.
{\bf 67} (2022) 938--961.


\bibitem{T1} M. Taylor: {\em Partial differential equations. II. Qualitative studies of linear equations}.  Volume 116 of Applied Mathematical Sciences. Springer-Verlag, New York, 1996.

\bibitem{T2} M. Taylor: {\em Tools for PDE: Pseudodifferential operators, paradifferential operators, and layer potentials}.
Volume 81 of Mathematical Surveys and Monographs, AMS, Providence, RI, 2000.

\bibitem{Tha} B. Thaller: {\em The Dirac equation}, Texts and Monographs in Physics, Springer-Verlag, Berlin, 1992.

\bibitem{Tretter} C. Tretter: {\em Spectral theory of block operator matrices and applications}. Imperial College Press, 2008.  
\end{thebibliography}
\end{document}